\newtheorem{theorem}{\sc Theorem}[section]
\newtheorem{lemma}[theorem]{\sc Lemma}
\newtheorem{proposition}[theorem]{\sc Proposition}
\begin{document}
\title[Profinite groups]{On profinite groups admitting a word with only few values}
\author{Pavel Shumyatsky }
\address{ Pavel Shumyatsky: Department of Mathematics, University of Brasilia,
Brasilia-DF, 70910-900 Brazil}
\email{pavel@unb.br}
\thanks{This research was supported by the Conselho Nacional de Desenvolvimento Cient\'{\i}fico e Tecnol\'ogico (CNPq),  and Funda\c c\~ao de Apoio \`a Pesquisa do Distrito Federal (FAPDF), Brazil.}
\keywords{Conciseness, strong conciseness, profinite groups, word problems}

\subjclass[2010]{Primary 20F10}

\begin{abstract} 
A group-word $w$ is called concise if the verbal subgroup $w(G)$ is finite whenever $w$ takes only finitely many values in a group $G$. It is known that there are words that are not concise. The problem whether every word is concise in the class of profinite groups remains wide open. Moreover, there is a conjecture that every word $w$ is strongly concise in profinite groups, that is, $w(G)$ is finite whenever $G$ is a profinite group in which $w$ takes less than $2^{\aleph_0}$ values. In this paper we show that if the word $w$ takes less than $2^{\aleph_0}$ values in a profinite group $G$ then $w(w(G))$ is finite.
\end{abstract}

\maketitle

\section{Introduction} A group-word $w=w(x_1,\dots, x_t)$ in variables $x_1,\dots, x_t$ is a nontrivial element of the free group on $x_1,\dots, x_t$.
Given such a word, we think of it primarily as a function of $t$ variables defined on any given group $G$. We denote by $w(G)$ the verbal subgroup of $G$ generated by the values of $w$. 
The word $w$ is called concise in the class of groups $\mathcal C$ if the verbal subgroup $w(G)$ is finite whenever $w$ takes only finitely many values in a group $G\in\mathcal C$. In the sixties Hall raised the problem whether every word is concise in the class of all groups but in 1989 S. Ivanov \cite{Iva89} solved the problem in the negative.

The problem whether all words are concise in profinite groups remains wide open (cf. Segal \cite[p.\ 15]{Seg09} or Jaikin-Zapirain \cite{Jai08}). Throughout this paper if $G$ is a profinite group, $w(G)$ stands for the closed subgroup topologically generated by $w$-values in $G$. In recent years several new positive results with respect to this problem were obtained (see \cite{AcSh14, acc23, DeMoShEng1, DeMoShEng2, DeMoSh19,DeMoSh23Isr, FASh18, GuSh15}). 

A natural variation of the notion of conciseness for profinite groups was introduced in \cite{dks20}. The reader is referred to \cite{rz} or \cite{wi} for background information on profinite groups. Recall that the cardinality of any infinite profinite group is at least $2^{\aleph_0}$ (continuum). The word $w$ is strongly concise in  profinite groups if the verbal subgroup $w(G)$ is finite in any profinite group $G$ in which $w$ takes less than $2^{\aleph_0}$ values.  A number of new results on strong conciseness of group-words can be found in \cite{AzSh21, Det23, dks20, HPS23, KhSh23, pishu}. 

The main result of this paper is the following theorem.

\begin{theorem}\label{main} If a word $w$ takes less than $2^{\aleph_0}$ values in a profinite group $G$, then $w(w(G))$ is finite.\end{theorem}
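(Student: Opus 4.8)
The plan is to set $K=w(G)$ and to reduce the statement to showing that the derived subgroup $K'=[K,K]$ is finite, after which a short abelianization argument finishes the proof. Observe first that, since $K\le G$, the word $w$ takes at most $|G_w|<2^{\aleph_0}$ values in $K$ as well, and $w(w(G))=w(K)$; so the whole problem takes place inside $K$.

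The first step is a topological lemma that I expect to drive the argument. For a $w$-value $v\in G_w$, its conjugacy class $v^{G}$ is the continuous image of the compact group $G$, hence a compact homogeneous $G$-space homeomorphic to $G/C_G(v)$, and it is contained in $G_w$, so $|v^{G}|\le |G_w|<2^{\aleph_0}$. A nonempty compact Hausdorff space without isolated points has at least $2^{\aleph_0}$ points (one builds a Cantor scheme of nonempty open sets using regularity), so $v^{G}$, having fewer points, must be scattered and therefore has an isolated point; by homogeneity every point is then isolated, and a compact discrete space is finite. Thus every $w$-value has finite conjugacy class, i.e. $C_G(v)$ is open for each $v\in G_w$, so that $w(G)$ is topologically generated by elements lying in the FC-centre of $G$.

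The heart of the proof — and the step I expect to be the main obstacle — is to upgrade this pointwise information to the assertion that $K'$ is finite, equivalently that the common centralizer $C_G(G_w)=C_G(K)$ is open (whence $K/Z(K)$ is finite and $K'$ is finite by Schur's theorem). The difficulty is that an intersection of the $<2^{\aleph_0}$ open subgroups $C_G(v)$ need not be open, so the cardinality hypothesis must be exploited a second time. The plan is to first establish a uniform bound on the sizes of the finite classes $v^{G}$ for $v\in G_w$: once such a bound $n$ is in hand, $K$ is topologically generated by a normal set of elements each having at most $n$ conjugates, and a Neumann-type (BFC) argument yields finiteness of $K'$. To produce the bound I would argue by contradiction: if the class sizes were unbounded, one should be able to splice together conjugates of $w$-values along a descending chain of open normal subgroups of $G$ so as to build a binary tree of pairwise distinct $w$-values, producing $2^{\aleph_0}$ values and contradicting the hypothesis. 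Making this tree construction precise, while controlling the compact scattered set $G_w$ through its Cantor--Bendixson structure, is the technical crux.

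Granting that $K'$ is finite, the proof concludes cleanly. The quotient $\overline{K}=K/K'$ is an abelian profinite group, and $w(\overline{K})=w(K)K'/K'$ is topologically generated by the images of the $w$-values of $K$. Since $\overline{K}$ is abelian, this verbal subgroup coincides with the set of values $\{\,\overline{a}_1^{\,e_1}\cdots\overline{a}_t^{\,e_t}:\overline{a}_i\in\overline{K}\,\}$, which equals the closed subgroup $\overline{K}^{\,d}=\{x^{d}:x\in\overline{K}\}$, where $e_i$ is the exponent sum of $x_i$ in $w$ and $d=\gcd(e_1,\dots,e_t)$. This subgroup is the continuous image of the compact set $K_w$, hence has cardinality at most $|K_w|<2^{\aleph_0}$; as an infinite profinite abelian group has cardinality at least $2^{\aleph_0}$, it follows that $w(\overline{K})$ is finite. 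Therefore $w(K)K'/K'$ is finite, and since $K'$ is finite we conclude that $w(K)=w(w(G))$ is finite, as required.
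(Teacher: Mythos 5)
Your opening step (every $w$-value lies in $FC(G)$, via the observation that a compact Hausdorff space with fewer than $2^{\aleph_0}$ points has an isolated point) is correct and is exactly Lemma~\ref{fcword} of the paper, and your closing abelianization step is also sound. But the middle step --- that $K'=[K,K]$ is finite --- is a genuine gap, and it is precisely the part you defer as ``the technical crux.'' Worse, the route you sketch for it would not work even if carried out: a uniform bound $n$ on the class sizes $|v^G|$ for $v$ in a normal generating set does \emph{not} imply finiteness of the derived subgroup, so there is no ``Neumann-type (BFC) argument'' to invoke. For instance, the restricted direct product $H=\bigoplus_{i\ge 1}S_3$ is generated by the normal set of elements supported on a single coordinate, each of which has at most $3$ conjugates, yet $H'=\bigoplus A_3$ is infinite; B.~H.~Neumann's theorem needs \emph{all} elements, not just the generators, to have boundedly many conjugates. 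I also caution that your intermediate target is strictly stronger than the theorem: the paper never proves (and does not need) that $[w(G),w(G)]$ is finite, only that $w(w(G))$ is, and it is not clear that the stronger statement is even true.

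The idea you are missing is to apply the cardinality principle not only to conjugation maps but to the word map itself. Since $K=w(G)$ is a profinite group in which $w$ takes fewer than $2^{\aleph_0}$ values, the continuous map $w\colon K^t\to K$ cannot be nowhere locally constant (Proposition~\ref{klopsch}), so there exist $a_1,\dots,a_t$ and an open normal subgroup $N$ of $K$ with $w(a_1g_1,\dots,a_tg_t)=b_0$ for all $g_i\in N$. Because $K$ is topologically generated by $FC$-elements (your first step), one may take the $a_i$, the value $b_0$, and a finite supplement $b_1,\dots,b_k$ of $N$ in $K$ all inside $FC(K)$, and then a short case analysis (non-commutator words force $K$ to be torsion, so $\langle a_i,b_j,b_0\rangle^K$ is finite by Dietzmann; for commutator words one factors out the finite subgroup $\prod[K,a_i]\prod[K,b_j]$ to make these elements central) shows $w(K)$ is contained in a finite normal subgroup. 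This bypasses any claim about $K'$ entirely.
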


Thus, the theorem says that the verbal subgroup of the verbal subgroup of $G$ is finite. For example, if $G$ is a profinite group such that the cardinality of the set $\{g^n;\ g\in G\}$ is less than $2^{\aleph_0}$, then $(G^n)^n$ is finite. In particular, $G^{n^2}$ is finite.

We deduce Theorem \ref{main} from a result that is interesting in its own right. Namely, it turns out that every word is strongly concise in the class of so-called $FC$-generated profinite groups (Proposition \ref{main2}). This class of groups is introduced in the next section.

\section{Preliminaries} 

If $X$ is a subset of a group $G$, we write $\langle X\rangle$ to denote the subgroup generated by $X$. For subsets $X,Y$ of a group, $[X,Y]$ denotes the subgroup generated by all commutators $[x,y]$, where $x$ and $y$ range over $X$ and $Y$ respectively. It is well-known that $[X,Y]$ is normal in $\langle X,Y\rangle$. We write $x^G$ for the conjugacy class of an element $x\in G$. If $K$ is a subgroup of $G$, the normal subgroup generated by $K$ is denoted by $K^G$.

Throughout, by a subgroup of a profinite group we mean a closed subgroup. In particular, $\langle X\rangle$ stands for the subgroup topologically generated by $X$. In some instances we also deal with abstract subgroups of profinite groups. Whenever this is the case we explicitly mention that we consider an abstract subgroup.

Given a group $G$, we write $FC(G)$ for the $FC$-centre of $G$, that is, the set of all elements $g\in G$ such that $|g^G|<\infty$. The group $G$ is an $FC$-group if $G=FC(G)$. Note that if $G$ is a profinite group, then $FC(G)$ need not be closed so in this case we treat $FC(G)$ as merely a subset of $G$ rather than a subgroup. The following properties of (abstract) $FC$-groups are well-known (see for example \cite[14.5.7, 14.5.9]{rob}). 

\begin{lemma}\label{fc} Let $G$ be an $FC$-group. Then we have
\begin{enumerate}
\item The commutator subgroup $G'$ is locally finite;
\item If $g\in G$, then $[G,g]$ is finite;
\item If $g$ is a torsion element of $G$, then $\langle g^G\rangle$ is finite.
\end{enumerate}
\end{lemma}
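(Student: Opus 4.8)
The plan is to establish the three assertions in order, deriving parts (2) and (3) from part (1). The engine of the whole argument is Schur's theorem: if the centre of a group has finite index, then its commutator subgroup is finite.

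For part (1), I would first reduce local finiteness of $G'$ to a statement about finitely generated subgroups of $G$ itself. Indeed, any finitely generated subgroup $L$ of $G'$ is generated by elements each of which is a product of finitely many commutators $[a,b]$ with $a,b\in G$; collecting the (finitely many) elements $a,b$ occurring in these commutators into a finite set, we obtain a finitely generated subgroup $H=\langle h_1,\dots,h_n\rangle$ of $G$ with $L\le H'$. Thus it suffices to show that $H'$ is finite for every finitely generated $H\le G$. Here the $FC$-hypothesis enters: each $h_i$ has a finite conjugacy class in $G$, so its centralizer $C_H(h_i)$ has finite index in $H$, and hence $Z(H)=\bigcap_{i=1}^n C_H(h_i)$, being an intersection of finitely many subgroups of finite index, has finite index in $H$. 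By Schur's theorem $H'$ is finite, and consequently $L$ is finite, which gives local finiteness of $G'$.

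For part (2), I would observe that, using $[x,g]=(g^x)^{-1}g$, the set $\{[x,g]:x\in G\}$ is in bijection with a subset of the conjugacy class of $g$ and is therefore finite, since $g\in FC(G)$. Hence $[G,g]$ is finitely generated, and as $[G,g]\le G'$ it must be finite by part (1). For part (3), writing $N=\langle g^G\rangle$, I note that $[G,g]$ is normal in $G$ (as recorded in the Preliminaries) and contained in $N$, and that modulo $[G,g]$ every conjugate $g^x$ coincides with $g$ because $g^{-1}g^x=[g,x]\in[G,g]$. Thus $N/[G,g]$ is cyclic, generated by the image of $g$; since $g$ is a torsion element this quotient is finite, and combined with the finiteness of $[G,g]$ from part (2) we conclude that $N$ is finite. (Alternatively, part (3) is immediate from Dietzmann's lemma applied to the finite, conjugation-invariant set $g^G$ of torsion elements.)

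The main obstacle is the core of part (1): passing from the finiteness of the conjugacy class of each \emph{individual} generator to the finiteness of the commutator subgroup $H'$. This is precisely where Schur's theorem is indispensable, converting the finite index of the centre into finiteness of $H'$; the remaining steps are essentially bookkeeping that reduce the global statements about $G$ to this finitely generated situation and then propagate the result to parts (2) and (3).
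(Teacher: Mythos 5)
Your proof is correct. The paper does not prove this lemma at all --- it simply cites Robinson \cite[14.5.7, 14.5.9]{rob} for these well-known facts --- and your argument (reduction to finitely generated subgroups, finite index of the centre via centralizers of generators, Schur's theorem for part (1), the bijection of $\{[x,g]:x\in G\}$ with a subset of $g^G$ for part (2), and the cyclic quotient $\langle g^G\rangle/[G,g]$ or Dietzmann's lemma for part (3)) is precisely the standard textbook proof found in the cited source.
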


We say that a profinite group $G$ is $FC$-generated if $G=\langle FC(G)\rangle$. In other words, $G$ is $FC$-generated if $FC(G)$ is dense in $G$.

\begin{lemma}\label{fcgen} Let $G$ be an $FC$-generated profinite group. Then we have
\begin{enumerate}
\item If $g\in FC(G)$, then $[G,g]$ is finite;
\item If $g$ is a torsion element of $FC(G)$, then $\langle g^G\rangle$ is finite.
\end{enumerate}
\end{lemma}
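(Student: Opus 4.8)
The plan is to reduce both statements to the corresponding facts about the \emph{abstract} $FC$-group $FC(G)$ recorded in Lemma \ref{fc}. The one ingredient needed from the profinite setting is the following observation: if $g\in FC(G)$, then $C_G(g)$ has finite index $|g^G|$ in $G$, and since $C_G(g)$ is closed, a closed subgroup of finite index in a profinite group is open. As $G$ is $FC$-generated, the subgroup $F=FC(G)$ is dense in $G$; because a dense subset meets every nonempty open set and each coset of an open subgroup is open, $F$ meets each of the finitely many cosets of $C_G(g)$. This single fact is what lets us replace $G$-conjugation by $F$-conjugation.

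For part (1), I would first note that $[x,g]=(g^x)^{-1}g$ depends only on the coset $C_G(g)x$, so the set $\{[x,g]:x\in G\}$ is finite, of cardinality $|g^G|$. Since $F$ meets every coset of $C_G(g)$, every commutator $[x,g]$ with $x\in G$ equals $[y,g]$ for some $y\in F$; hence $\{[x,g]:x\in G\}=\{[x,g]:x\in F\}$. Consequently the abstract commutator subgroup $[F,g]=\langle [x,g]:x\in F\rangle$ coincides with the abstract subgroup generated by $\{[x,g]:x\in G\}$, whose closure is by definition the closed subgroup $[G,g]$. Now $F$ is an abstract $FC$-group and $g\in F$, so Lemma \ref{fc}(2) gives that $[F,g]$ is finite. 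A finite subgroup is closed, so $[G,g]=[F,g]$ is finite, as required.

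Part (2) runs along the same lines. Since $g^x$ depends only on the coset $C_G(g)x$ and $F$ meets each such coset, one obtains $g^G=g^F$ as sets. Applying Lemma \ref{fc}(3) to the abstract $FC$-group $F$ and the torsion element $g\in F$ shows that $\langle g^F\rangle$ is finite. Hence the abstract subgroup $\langle g^G\rangle=\langle g^F\rangle$ is finite, and being finite it is closed, so it coincides with the closed subgroup $\langle g^G\rangle$ we set out to bound.

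The only step I expect to require genuine care — and which I regard as the crux — is the passage from $G$-conjugation to $FC(G)$-conjugation: one must use that $C_G(g)$ is \emph{open}, and not merely of finite index as an abstract subgroup, so that density of $FC(G)$ forces it to meet every coset. Once this is secured, everything reduces to the purely abstract $FC$-group statements of Lemma \ref{fc}, and the topology plays no further role, since the subgroups produced are finite and therefore automatically closed.
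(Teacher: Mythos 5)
Your proof is correct and is essentially the argument the paper intends: the paper's proof is the one-line remark that the lemma is ``straightforward from Lemma \ref{fc} keeping in mind that $FC(G)$ is dense in $G$,'' and your write-up supplies exactly the missing details (openness of the closed finite-index subgroup $C_G(g)$, density of $FC(G)$ forcing it to meet every coset, and finiteness implying closedness).
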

\begin{proof} This is straightforward from Lemma \ref{fc} keeping in mind that $FC(G)$ is dense in $G$.
\end{proof}

The following results will be helpful. 

\begin{proposition}[\cite{dks20} Proposition 2.1]\label{klopsch}
    Let $\varphi:X\to Y$ be a continuous map between two non-empty profinite spaces that is nowhere locally constant (i.e. there is no non-empty open subset $U\subseteq X$ where $\varphi|_U$ is constant). Then $|\varphi(X)|\geq 2^{\aleph_0}$.
\end{proposition}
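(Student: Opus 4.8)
The plan is to embed the Cantor space into the image $\varphi(X)$ by building a binary splitting tree of clopen subsets of $X$ along which $\varphi$ keeps the two branches apart. Recall that both $X$ and $Y$, being profinite spaces, are compact, Hausdorff and totally disconnected; in particular each has a base of clopen sets, and any two distinct points of $Y$ can be separated by a clopen subset.

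First I would construct, by induction on the length $n$ of finite binary strings $s\in\{0,1\}^n$, a family of nonempty clopen sets $U_s\subseteq X$ enjoying the following properties: $U_{\emptyset}=X$; for each $s$ the sets $U_{s0}$ and $U_{s1}$ are disjoint clopen subsets of $U_s$; and, crucially, $\varphi(U_{s0})\cap\varphi(U_{s1})=\emptyset$. The inductive step uses the hypothesis that $\varphi$ is nowhere locally constant: since $U_s$ is a nonempty \emph{open} set, $\varphi|_{U_s}$ is not constant, so there are points $x,x'\in U_s$ with $\varphi(x)\neq\varphi(x')$. As $Y$ is a profinite space, I may choose a clopen set $V\subseteq Y$ with $\varphi(x)\in V$ and $\varphi(x')\notin V$. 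By continuity of $\varphi$ the preimages $\varphi^{-1}(V)$ and $\varphi^{-1}(Y\setminus V)$ are clopen in $X$, so I set $U_{s0}=U_s\cap\varphi^{-1}(V)$ and $U_{s1}=U_s\cap\varphi^{-1}(Y\setminus V)$. These are clopen, disjoint, and nonempty (they contain $x$ and $x'$ respectively), and by construction $\varphi(U_{s0})\subseteq V$ while $\varphi(U_{s1})\subseteq Y\setminus V$, whence their images are disjoint.

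Next, for each infinite binary sequence $\alpha\in\{0,1\}^{\omega}$ I would form the nested intersection $\bigcap_{n}U_{\alpha|n}$ of the clopen sets indexed by the initial segments $\alpha|n$ of $\alpha$. Since $X$ is compact and these sets form a decreasing chain of nonempty closed subsets, the intersection is nonempty, and I pick a point $x_\alpha$ in it. The final step is to verify that $\alpha\mapsto\varphi(x_\alpha)$ is injective. Indeed, if $\alpha\neq\beta$ they first differ at some coordinate $n$, say $\alpha|n=\beta|n=s$ with $\alpha(n)=0$ and $\beta(n)=1$; then $x_\alpha\in U_{s0}$ and $x_\beta\in U_{s1}$, and since $\varphi(U_{s0})\cap\varphi(U_{s1})=\emptyset$ we obtain $\varphi(x_\alpha)\neq\varphi(x_\beta)$. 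Hence $\varphi(X)$ contains at least $|\{0,1\}^{\omega}|=2^{\aleph_0}$ distinct elements.

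The only delicate point is to guarantee that the tree construction never gets stuck, and this is precisely where the full force of the hypothesis enters: nowhere local constancy must hold not merely on $X$ but on \emph{every} nonempty open subset, so that at each node the open set $U_s$ again satisfies that $\varphi|_{U_s}$ is non-constant and the split can be carried out. Everything else is a routine combination of continuity (to pull clopen sets back to clopen sets), total disconnectedness of $Y$ (to separate two image points by a clopen set), and compactness of $X$ (to realize each infinite branch by an actual point), so beyond the bookkeeping of the induction I anticipate no serious obstacle.
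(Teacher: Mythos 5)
Your construction is correct and is essentially the same argument as in the cited source \cite{dks20} (the present paper quotes the proposition from there without reproving it): a binary tree of non-empty clopen subsets split at each node using nowhere local constancy plus a separating clopen set in $Y$, so that the two children have disjoint images, followed by compactness to realize each infinite branch by a point $x_\alpha$ and injectivity of $\alpha\mapsto\varphi(x_\alpha)$. All steps check out, including the subtle ones you flag: preimages of clopen sets are clopen, the nested intersections are non-empty by compactness, and the hypothesis is correctly invoked on each $U_s$ as a non-empty open set.
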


\begin{lemma}[\cite{dks20} Lemma 2.2]\label{fcword}
Let $G$ be a profinite group and $g\in G$ be an element whose conjugacy class $g^G$ contains less than $2^{\aleph_0}$ elements. Then $g\in FC(G)$.
\end{lemma}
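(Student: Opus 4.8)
The plan is to feed the conjugation map into Proposition \ref{klopsch}. Consider the continuous map $\varphi\colon G\to G$ given by $\varphi(x)=g^x=x^{-1}gx$. Since $G$ is a profinite group it is in particular a non-empty profinite space, and the image $\varphi(G)$ is precisely the conjugacy class $g^G$, which by hypothesis has fewer than $2^{\aleph_0}$ elements. Therefore $\varphi$ cannot be nowhere locally constant, for otherwise Proposition \ref{klopsch} would force $|g^G|\geq 2^{\aleph_0}$. Hence there exists a non-empty open subset $U\subseteq G$ on which $\varphi$ is constant.

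Next I would extract from this an open subgroup sitting inside the centralizer $C_G(g)$. Fix a point $x_0\in U$. For every $u\in U$ we have $g^u=g^{x_0}$, and a short rearrangement gives $g^{ux_0^{-1}}=g$, that is, $ux_0^{-1}\in C_G(g)$. Thus the open set $Ux_0^{-1}$, which contains the identity because $x_0\in U$, is contained in $C_G(g)$. In a profinite group every open neighbourhood of the identity contains an open subgroup, and a subgroup containing an open neighbourhood of $1$ is itself open; hence $C_G(g)$ is open.

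Finally, an open subgroup of a compact group has finite index, so $[G:C_G(g)]<\infty$. Since $|g^G|=[G:C_G(g)]$, the class $g^G$ is finite and $g\in FC(G)$, as required. The whole argument is short: the essential idea is simply recognizing that conjugation by the elements of $G$ is the correct continuous map to which Proposition \ref{klopsch} applies. The only step demanding a little care is the passage from \emph{constant on an open set} to \emph{open centralizer}; once that translation is made, the conclusion follows from standard compactness facts about profinite groups, so I do not anticipate a genuine obstacle.
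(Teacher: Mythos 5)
Your proof is correct and is essentially the argument behind the cited Lemma 2.2 of \cite{dks20}: apply Proposition \ref{klopsch} to the conjugation map $x\mapsto g^x$, translate the resulting open set of constancy to an open neighbourhood of the identity inside $C_G(g)$, and conclude that $C_G(g)$ is open, hence of finite index. The paper itself quotes this lemma without proof, so there is nothing further to compare.
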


\section{Main theorem}

A group-word $w$ is a commutator word if the sum of the exponents of each variable involved in $w$ is zero.

\begin{proposition}\label{main2} Every word $w$ is strongly concise in the class of $FC$-generated profinite groups.
\end{proposition}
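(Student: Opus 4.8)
The plan is to show that the abstract subgroup $\langle G_w\rangle$ generated by the set $G_w$ of $w$-values is already finite; since a finite subgroup is closed, this is equivalent to the finiteness of $w(G)=\overline{\langle G_w\rangle}$. First I would record that every $w$-value lies in $FC(G)$: conjugates of a $w$-value are again $w$-values, so the conjugacy class of any $v\in G_w$ is contained in $G_w$ and hence has fewer than $2^{\aleph_0}$ elements, and Lemma \ref{fcword} gives $v\in FC(G)$. Thus $\langle G_w\rangle\le FC(G)$ is an abstract $FC$-group, so Lemma \ref{fc} applies to it and to $FC(G)$ itself.

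Next I would separate the ``abelian part'' from the ``commutator part''. Passing to the abelianization $A=G/\overline{G'}$, which is a profinite group in which $w$ still takes fewer than $2^{\aleph_0}$ values, the set of $w$-values becomes the set of $d$-th powers $A^{d}$, where $d$ is the greatest common divisor of the exponent sums of the variables of $w$. This is a closed subgroup, and an infinite profinite abelian group has at least $2^{\aleph_0}$ distinct $d$-th powers unless $A^{d}$ is finite; hence $A^{d}=w(A)$ is finite. Consequently $w(G)\overline{G'}/\overline{G'}$ is finite, and it suffices to prove that $w(G)\cap\overline{G'}$ is finite. This reduces the problem to the case of commutator words, whose values all lie in $\overline{G'}$.

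For the commutator part I would bring in Proposition \ref{klopsch}. Since $w$ takes fewer than $2^{\aleph_0}$ values, the word map $w\colon G^{t}\to G$ cannot be nowhere locally constant, so it is constant on some basic open set $g_{1}N\times\cdots\times g_{t}N$ with $N$ open and normal. Because $FC(G)$ is dense and $N$ is open, each coset $g_iN$ meets $FC(G)$, so by constancy the corresponding value is realized as $w(a_1,\dots,a_t)$ with all $a_i\in FC(G)$. For a commutator word $w\in F'$ such a value is a product of commutators of the $a_i$, each lying in a finite subgroup $[G,a_i]$ (Lemma \ref{fcgen}), and therefore in the locally finite group $FC(G)'$ (Lemma \ref{fc} applied to the $FC$-group $FC(G)$); in particular it is a torsion element. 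Propagating this across $G^{t}$ by a compactness/covering argument, I would conclude that $G_w$ is finite and consists of torsion elements of $FC(G)'$. A finitely generated subgroup of the locally finite group $FC(G)'$ is finite (equivalently, a finite normal set of torsion elements generates a finite subgroup by Dietzmann's lemma), so $\langle G_w\rangle$, and hence $w(G)\cap\overline{G'}$, is finite. Combined with the previous paragraph this yields the finiteness of $w(G)$.

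The step I expect to be the main obstacle is the bridge between the topological and algebraic inputs in the last paragraph. Proposition \ref{klopsch} furnishes local constancy on a single open set, whereas I must control all $w$-values simultaneously; turning one constancy coset into global finiteness of $G_w$, while at the same time verifying that every value of the commutator part is torsion so that local finiteness of $FC(G)'$ (or Dietzmann's lemma) can be invoked, is where the real work lies. The $FC$-generated hypothesis is essential precisely here, since it is what forces a commutator value lying in $FC(G)$ to have finite order; without density of $FC(G)$ one could have commutator values of infinite order, as in a Heisenberg-type pro-$p$ group.
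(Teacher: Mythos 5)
Your opening moves coincide with the paper's: Proposition \ref{klopsch} produces a basic open set $a_1N\times\cdots\times a_tN$ on which the word map is constant, and density of $FC(G)$ lets you take the $a_i$ in $FC(G)$. But the proof has a genuine gap exactly where you flag it, and the route you sketch for closing it does not work. Applying Proposition \ref{klopsch} to every basic open subset of $G^t$ only shows that the open sets on which $w$ is constant form a \emph{dense} open subset of $G^t$; a continuous map on a profinite space with fewer than $2^{\aleph_0}$ values need not be locally constant at every point (the map sending a $p$-adic integer $x$ to $p^{v(x)}$, where $v$ is the valuation, is continuous with countably many values but not locally constant at $0$), so there is no covering of $G^t$ by constancy opens and no compactness argument to run. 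Worse, if such a covering existed it would prove that any word with fewer than $2^{\aleph_0}$ values has only finitely many values in \emph{any} profinite group, which is far stronger than anything available; in the paper the finiteness of the value set is only a consequence of the theorem, never an intermediate step. The idea missing from your sketch is the paper's way of propagating constancy from one coset to all of $G$: choose finitely many $b_1,\dots,b_k\in FC(G)$ with $G=\langle b_1,\dots,b_k,N\rangle$ and pass to the quotient by a finite normal subgroup built from the $FC$-data. For a commutator word one takes $V=\prod_i[G,a_i]\prod_j[G,b_j]$, finite by Lemma \ref{fcgen}(1); modulo $V$ the elements $a_i,b_j$ are central, so $\bar b_0=w(\bar a_1,\dots,\bar a_t)=1$, the constancy relation becomes $w(\bar N)=1$, and $\bar G=Z(\bar G)\bar N$ forces $w(\bar G)=1$, whence $w(G)\leq V$. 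One proves that $w$ is an identity modulo a finite normal subgroup, not that $G_w$ is finite.

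There is a second, independent gap in your reduction to commutator words. The abelianization step is fine as far as it goes: the $w$-values of $A=G/\overline{G'}$ form the closed subgroup $A^d$, so $w(G)\overline{G'}/\overline{G'}$ is finite and it suffices to show that $w(G)\cap\overline{G'}$ is finite. But that intersection is not the verbal subgroup of any commutator word, so nothing entitles you to continue under the assumption $w\in F'$; decomposing $w$ as $x_1^{r_1}\cdots x_t^{r_t}c$ with $c$ a commutator word does not transfer the hypothesis, since $c$ may take $2^{\aleph_0}$ values even when $w$ does not. The paper handles non-commutator words by a separate argument: if some exponent sum $r\neq 0$, then every $r$-th power is a $w$-value, so $G$ has fewer than $2^{\aleph_0}$ $r$-th powers and is therefore torsion; the normal closure $U=\langle a_1,\dots,a_t,b_0,b_1,\dots,b_k\rangle^G$ is then finite by Lemma \ref{fcgen}(2), and $w$ is an identity modulo $U$. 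Finally, your observation that all $w$-values lie in $FC(G)$ is correct but is not needed inside this proposition; the paper uses it only afterwards, to see that $w(G)$ is $FC$-generated when deducing Theorem \ref{main}.
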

\begin{proof} Let $G$ be an $FC$-generated profinite group and suppose that a word $w=w(x_1,\dots,x_t)$ takes less than $2^{\aleph_0}$ values in $G$. For every $w$-value $s\in G$ let $X_s$ denote the set of all tuples $$(g_1,\dots,g_t)\in G\times\dots\times G\ \ \  (t \text{ factors})$$ such that $w(g_1,\dots,g_t)=s$. By Proposition \ref{klopsch}, at least one of the sets $X_s$ contains a non-empty open subset. Hence, there are elements $a_1,\dots,a_t\in G$ and an open normal subgroup $N$ of $G$ such that for any $g_1,\dots,g_t\in N$ we have $w(a_1g_1,\dots,a_tg_t)\in X_{b_0}$, where $b_0=w(a_1,\dots,a_t)$. In other words, $$w(a_1g_1,\dots,a_tg_t)=b_0$$ for any $g_1,\dots,g_t\in N$.

Since $FC(G)$ is dense in $G$, each coset $a_iN$ contains $FC$-elements. So without loss of generality we can assume that $a_1,\dots,a_t\in FC(G)$. Furthermore, again because $FC(G)$ is dense in $G$, there are finitely many elements $b_1,\dots,b_k\in FC(G)$ such that $G=\langle b_1,\dots,b_k, N\rangle$.

We now consider the case where $w$ is a non-commutator word. So there is a variable $x_i$ such that the sum of the exponents of $x_i$ in $w$ is $r\neq 0$. Substitute the unit for all the variables except $x_i$ and an arbitrary element $g\in G$ for $x_i$. We see that $g^r$ is a $w$-value for all $g\in G$. Hence  $G$ contains less than $2^{\aleph_0}$ $r$-th powers. It follows that $G$ is torsion. 

Set $U=\langle a_1,\dots,a_t, b_0,b_1,\dots,b_k\rangle^G$. In view of Lemma \ref{fcword} $b_0\in FC(G)$. Keeping in mind that also the elements $a_1,\dots,a_t, b_1,\dots,b_k$ are contained in $FC(G)$ we deduce from Lemma \ref{fcgen} (2) that $U$ is finite. Taking into account that $G=\langle b_1,\dots,b_k, N\rangle$ and $w(a_1g_1,\dots,a_tg_t)=b_0$ for any $g_1,\dots,g_t\in N$, it follows that $G/U$ satisfies the identity $w(x_1,\dots,x_t)\equiv1$. Hence, $w(G)\leq U$. In particular, $w(G)$ is finite.

It remains to handle the case in which $w$ is a commutator word. Set $V=\prod_{1\leq i\leq t}[G,a_i]\prod_{1\leq j\leq k}[G,b_j]$. In view of Lemma \ref{fcgen} (1) the factors $[G,a_i]$ and $[G,b_j]$ are finite so $V$ is a finite normal subgroup. Let $\bar G=G/V$ and if $X\subset G$, let $\bar X$ be the image of $X$ in $\bar G$. Observe that $\bar{a}_i,\bar{b}_j\in Z(\bar{G})$. Since $w(a_1,\dots,a_t)=b_0$ and since $w$ is a commutator word, we deduce that $\bar{b}_0$ is trivial.
Taking into account that $\bar{a}_i\in Z(\bar{G})$ and $w(\bar{a}_1\bar{g}_1,\dots,\bar{a}_t\bar{g}_t)=1$ for any $\bar{g}_1,\dots,\bar{g}_t\in\bar{N}$ write $$1=w(\bar{a}_1\bar{g}_1,\dots,\bar{a}_t\bar{g}_t)=w(\bar{a}_1,\dots,\bar{a}_t)w(\bar{g}_1,\dots,\bar{g}_t)=w(\bar{g}_1,\dots,\bar{g}_t).$$ Therefore $w(\bar{N})=1$. Observe that $\bar{G}=Z(\bar{G})\bar{N}$. For arbitrary elements $\bar{h}_1,\dots,\bar{h}_t\in\bar{G}$ there are $z_i\in Z(\bar{G})$ and $\bar{g}_i\in\bar{N}$ such that $\bar{h}_i=z_i\bar{g}_i$, whence
$$w(\bar{h}_1,\dots,\bar{h}_t)=w(\bar{g}_1,\dots,\bar{g}_t)=1.$$ 

This shows that $w(\bar{G})=1$. Hence $w(G)\leq V$ is finite. The proof is complete.
\end{proof}

Theorem \ref{main} is now straightforward. Indeed, assume that the word $w$ takes less than $2^{\aleph_0}$ values in a profinite group $G$. Set $K=w(G)$. Lemma \ref{fcword} immediately shows that the $w$-values are contained $FC(G)$. Hence, $K$ is $FC$-generated. According to Proposition \ref{main2} $w(K)$ is finite. The desired result follows.

\end{document}